\newtheorem{thm}{Theorem}[section]
\theoremstyle{plain}
\newtheorem{prop}[thm]{Proposition}
\newtheorem{cor}[thm]{Corollary}
\theoremstyle{remark}
\numberwithin{equation}{section}
\newtheorem*{remark}{Remark}
\begin{document}

\title{Regularized Laplacian determinants of self-similar fractals}

\author[Joe P. Chen]{Joe P. Chen}
\address{Joe P. Chen\\
	Department of Mathematics\\
	Colgate Universty\\
	Hamilton, NY 13346\\
	USA.}
\email{jpchen@colgate.edu}

\author[Alexander Teplyaev]{Alexander Teplyaev}
\address{Alexander Teplyaev\\
	Department of Mathematics\\
	University of Connecticut\\
	Storrs, CT 06269\\
	USA.}
\email{teplyaev@math.uconn.edu}

\author[Konstantinos Tsougkas]{Konstantinos Tsougkas}

\address{Konstantinos Tsougkas\\
         Department of Mathematics\\
         Uppsala University\\
         751 05 Uppsala\\
         Sweden.}

\email{konstantinos.tsougkas@math.uu.se}

\maketitle

\begin{abstract}
	We study the spectral zeta functions of the Laplacian on 
			fractal sets which are locally self-similar fractafolds, in the sense of Strichartz.  These functions are known to meromorphically extend to the entire complex plane, and the locations of their poles, sometimes referred to as complex dimensions, are of special interest. We give examples of 
			locally self-similar sets such that their complex dimensions are not on the imaginary axis, which allows us to interpret their Laplacian determinant as the regularized product of their eigenvalues. We then investigate a connection between the logarithm of the determinant of the discrete graph Laplacian and the regularized one.

\end{abstract}

\section{Introduction} \noindent
If we have a differential Laplace operator $\mathcal{L}$ with discrete spectrum, we can define its spectral zeta function as
$$\zeta_{\mathcal{L}}(s)=\text{Tr}\left\{\frac{1}{\mathcal{L}^s}\right\}=\sum_n \frac{1}{\lambda_n^s},$$
where the zero eigenvalue is excluded and eigenvalues are added according to their multiplicities. Equivalently, if we have the heat kernel trace $K(t)=\sum_n e^{-\lambda_n t}$, we can define the spectral zeta function as the Mellin transform of the heat kernel trace minus one to remove the eigenvalue zero, namely:
$$\zeta_{\mathcal{L}}(s)=\frac{1}{\Gamma(s)}\int_0^{\infty} (K(t)-1) t^{s-1} dt.$$
Now we can write formally 
$$\det\mathcal{L}=\prod_{i=1}^{\infty} \lambda_i$$
to be the product of its nonzero eigenvalues, and we make the convention for the rest of this paper that the zero eigenvalue will always be excluded from any determinant. Of course, in the cases that we will consider the eigenvalues diverge to infinity, so this product exists only in a formal sense. We are interested, however, in assigning some meaning to it, which we can do by the following formal observations
$$\zeta_{\mathcal{L}}'(s)=\left( \sum_{i=1}^{\infty} \frac{1}{\lambda_i^s}\right)'=- \sum_{i=1}^{\infty} \frac{1}{\lambda_i^s} \log{\lambda_i}.$$
Evaluating at $s=0$ we get
$$ \zeta'_{\mathcal{L}}(0)=-\sum_{i=1}^{\infty} \log{\lambda_i}=\log{\prod_{i=1}^{\infty} \lambda_i}=-\log{\det\mathcal{L}},$$
so we can define the determinant of the operator $\mathcal{L}$ to be $\det\mathcal{L}=e^{-\zeta'_{\mathcal{L}}(0)}$.

The spectrum of the Laplace operator on fractals has been the focus of considerable work, see \emph{e.g.\@} \cite{chen2015spectral,chen2016singularly,fukushima1992spectral,kigami1993weyl,shima1991eigenvalue,shima1996eigenvalue,fractafold,strichartz2012exact,strichartz2012spectral}. Given a post-critically finite (p.c.f.) self-similar set (see \cite{kigami2001analysis} for the definition of p.c.f.\@), one can compute its spectral dimension $d_s$ and walk dimensions $d_w$, and these dimensions are connected via the Einstein relation $d_s=2\frac{d_f}{d_w}$ where $d_f$ is the Hausdorff dimension.
In \cite{grabner,teplstei,teplyaev2004spectral,teplyaev} the spectral zeta functions have been studied, and while they are defined initially only for $s>\frac{d_s}{2}$, they are shown to meromorphically extend to the entire complex plane. Their poles, also called \emph{complex dimensions} \cite{lapidus2000fractal}, are studied in \cite{teplstei} and it is proven that for a large class of p.c.f. fractals with symmetries, that the poles can only be on the imaginary axis or on the axis where ${\rm Re}(s)=\frac{d_s}{2}$.

Our long term motivation comes from quantum physics, in particular such recent papers as \cite{akkermans2013statistical,
	akkermans2012wave,
akkermans2009physical, 
	akkermans2010thermodynamics, akkermans2013spontaneous,
	dunne2012heat,
    elizalde2012introduction,
	lauscher2005fractal,
	reuter2011fractal,tanese2014fractal} and more classical works \cite{elizalde1994zeta,englert1987metric,hawking1977zeta,knizhnik1988fractal}. Our immediate mathematical motivation is twofold. On the one hand, it comes from the following statement found in \cite{grabner} and \cite{derfel2012laplace}:

``If there were no poles on the imaginary axis, then $e^{-\zeta'_{\Delta}(0)}$ would be the regularized product of eigenvalues or the Fredholm determinant of $\Delta$."

On the other hand, in \cite{karl} a connection between the determinant of the discrete Laplacians and the regularized determinant has been made in the setting of the discrete Euclidean torus. Specifically, let $N=(n_1(u), \dots n_d(u))$ denote a $d$-tuple of positive integers parametrized by $u \in \mathbb{Z}$, such that for each $j$, we have $\frac{n_j(u)}{u} \rightarrow a_j$ as $u \rightarrow \infty$. One then defines the $d$-dimensional discrete torus as the product space
$$DT_{N(u)}=\prod_{i=1}^d n_j(u)\mathbb{Z} / \mathbb{Z}.$$ If $A$ is the diagonal matrix with entries $a_j$ and $V(a)=a_1\cdots a_d$, the authors of \cite{karl} established the formula
\begin{align}
\label{asymp}
\log{\det\Delta_{DT_{N(u)}}}=V(N(u))\mathcal{I}_d(0)+\log{u^2}+\log{\det\Delta_{RT,A}}+o(1) \text{ as } u\rightarrow \infty,
\end{align}
where $RT,A$ is the real torus $A \mathbb{Z}^d / \mathbb{R}^d$, and $\mathcal{I}$ is a specific special function. A variation of this result was also studied in \cite{Vert}.

The goal of this paper is to give examples of fractals whose spectral zeta functions have no poles on the imaginary axis, which then allows us to define the corresponding Laplacian determinant, interpreted as the regularized product of the Laplace eigenvalues. 
This result can be stated as a regularized limit and has been proven again with a different methodology in \cite{Vert}. More specifically, if $f \in C^{\infty}(\mathbb{R}^{+},\mathbb{C})$ and is of the form
	$$f(x)=\sum_{j=1}^{N-1}\sum_{k=0}^{M_j} a_{jk} x^{a_j}\log^k{x}+\sum_{k=0}^{M_0}a_{0k} \log^k{x}+o(x^{a_N}\log^{M_N}{x})$$
	for some $N \in \mathbb{N}$, $(a_j) \subset \mathbb{C}$ such that $({\rm Re}(a_j))$ is monotonically decreasing and ${\rm Re}(a_N)<0$, then we define the regularized limit of $f$ as
	$\widetilde{\lim_{x \rightarrow \infty}} f(x)=a_{00}.$
Then as in \cite{Vert} we can restate the above result as
$$\widetilde{\lim_{n \rightarrow \infty}} \log\det\Delta_n=\log\det\Delta.$$

Motivated by the above mentioned connection between a classical determinant and a zeta regularized determinant, we investigate a similar relation on some fractal examples. In this paper we study three concrete examples: the diamond fractal, the $N-1$ dimensional double Sierpi\'nski gaskets ($SG^N)$, and the double $pq$-model on the unit interval. All three examples satisfy spectral decimation, which leads to closed-form expressions for the spectral zeta functions and the Laplacian determinants. However, only for the double Sierpi\'nski gaskets and the double $pq$-model do we have exact analogs of \eqref{asymp}. Details will be described in subsequent sections, after a review of basic notions from analysis on fractals and graph theory.

\section{Notions of analysis on fractals and graph theory} \noindent 
The fractals we will study are self-similar sets defined in the following way. Given a compact connected metric space $(X,d)$, and injective contractions $F_i: X \rightarrow X$, $i \in \{1,2,..., m\}$, there exists a unique non-empty compact subset $K$ of $X$ that satisfies 
$$K=\bigcup_{i=1}^m F_i(K).$$
This will be our self-similar set. A fixed point $p_1$ of one of the maps $F_i$ for some $1\leq i\leq m$ is called an essential fixed point if there exists another fixed point $p_2$ such that $F_j(p_1)=F_k(p_2)$ for some $1 \leq j\neq k\leq m$.  Associated to $K$ is a sequence of approximating graphs $\{G_n : n\geq 0\}$, defined as follows. Let $V_0$ be the set consisting of the essential fixed points of the maps $F_i$, and $G_{0}$ be the complete graph on $V_{0}$. For $n\geq 1$, we define inductively
$$V_n:= \bigcup_{i=1}^m F_i(V_{n-1}),$$
and declare $x,y\in V_n$ to be connected by an edge in $G_n$ (denoted $x\underset{n}{\sim} y$) if $F_i^{-1}(x)$ and $F_i^{-1}(y)$ are connected by an edge in $G_{n-1}$ for some $1\leq i\leq m$. We define the Dirichlet form on $G_n$ in the usual way
$$ \mathcal{E}_m(u,v)= \sum_{x\underset{m}{\sim} y} \left(u(x)-u(y) \right)\left(v(x)-v(y) \right), \quad u,v: V_m\to\mathbb{R}.$$
In many fractal examples it is possible to show that $\displaystyle \lim_{n\to\infty} r^{-n} \mathcal{E}_n(u,v)$ exists, where $r>0$ is a renormalization constant. We denote this limit by $\mathcal{E}(u,v)$, and write $\mathcal{E}(u) $ to stand for $\mathcal{E}(u,u)$. For example, the standard two-dimensional Sierpi\'nski gasket $SG_2$ satisfies this property with $r=\frac{3}{5}$. 
Every function of finite energy is continuous. In fact $dom\mathcal{E}$, the space of functions of finite energy, is a dense subspace of the space of continuous functions on $K$. Given the energy, we can define the Laplacian, which is the main focus of our study.
For $u \in dom\mathcal{E}$ we say $u \in dom\Delta_{\mu}$ and $\Delta_{\mu}u=f$ if
$$\mathcal{E}(u,v)=- \int_K {fv} \mathrm{d\mu} \text{  for all } v\in dom_0 \mathcal{E} $$
where $dom_0 \mathcal{E}$ denotes the subset of $dom \mathcal{E}$ such that the functions also vanish on the boundary. We will use the convention from now on $-\Delta_{\mu}=\mathcal{L}$, we always assume that $\mu$ will be the standard self-similar measure, so it will be omitted from the notation.

\subsection{Spectral decimation and zeta functions}

The Laplace operator $\mathcal{L}$ with Neumann or Dirichlet conditions is a non-negative self-adjoint operator with compact resolvent. Its spectrum consists of discrete eigenvalues such that
$$0 \leqslant \lambda_1 < \lambda_2 \leqslant \lambda_3 \leqslant \dots$$
with $\lambda_n \rightarrow \infty$, and thus we can define its spectral zeta function as in the introduction. The key technique to studying the spectrum of the Laplacian on fractals is spectral decimation. Essentially, spectral decimation allows us to recursively obtain the eigenvalues of a given graph level approximation from knowledge of the previous graph approximation. In the end, taking a limit gives us the spectrum of the Laplace operator on the self-similar set. More rigorously, we say that we have spectral decimation if all eigenvalues of $\mathcal{L}$ are of the form 
$$- \lambda^m \lim_{n\rightarrow \infty} \lambda^n R^{(-n)}(w)$$
for $w \in A$, where $A$ is a finite set and $R$ is a rational function. For the limit to exist the elements of the preimages $R^{-(n)}(w)$ must be chosen appropriately. The value $m$ stands for the so-called generation of birth of the eigenvalue $w$ and is independent of $n$; more information may be found at \cite{grabner,fukushima1992spectral,strichartz2006differential,teplyaev}. In many cases this rational function turns out to be a polynomial. The quantity $\lambda$ is also known as the time-scaling factor.

Now, let $R(z)=a_dx^d+\dots+\lambda x$ be a polynomial with real coefficients and $d\geq 2$ which satisfies $R(0)=0$ and $R'(0)= \lambda > 1$. We denote as $\Phi$ the entire function which is a solution of the functional equation 
$$\Phi (\lambda z)=R(\Phi (z)) \text{ with } \Phi(0)=0, \Phi ' (0)=1.$$
We also define the so-called polynomial zeta functions
$$\zeta_{\Phi,w}(s)= \sum\limits_{\substack{\Phi(-\mu)=w \\ \mu >0}} \mu^{-s}$$
or equivalently as
$$\zeta_{\Phi,w}(s)= \lim_{n \rightarrow \infty} \sum_{z \in R^{-n}(w)} (\lambda^n z)^{-s}.$$
These zeta functions have been studied in \cite{grabner}, \cite{teplyaev} and were used in the meromorphic extension of the spectral zeta functions. We know the following facts about them.
For $w<0$ we have that 
$$\zeta_{\Phi,w}(0)=0 \quad \text{ and } \quad \zeta'_{\Phi,w}(0)=-\frac{\log{a_d}}{d-1}-\log{(-w)}$$
and for $w=0$
$$\zeta_{\Phi,0}(0)=-1 \quad \text{ and } \quad \zeta'_{\Phi,0}(0)=-\frac{\log{a_d}}{d-1}.$$
Moreover they can be meromorphically extended to the entire complex plane and have no poles on the imaginary axis. More specifically, all the poles are simple and are located on the imaginary line where ${\rm Re}(s)=\frac{\log{d}}{\log{\lambda}}$. For further information we refer the reader to \cite{grabner}, \cite{teplyaev}.

\subsection{Counting spanning trees in fractal graphs}

In the graph theoretic setting, the determinant of the discrete Laplacian is widely studied, as it is related to the enumeration of spanning trees via Kirchhoff's Matrix-Tree theorem. To be concrete, we define the \emph{combinatorial graph Laplacian} of a graph $G_n$ as $\Delta_n=D-A$, and the \emph{probabilistic graph Laplacian} as $\mathcal{L}_n=I-D^{-1}A$, where $D$ is the diagonal degree matrix and $A$ is the adjacency matrix. Then the number of spanning trees in $G_n$ may be expressed in either of two ways:
$$\tau(G_n)=\frac{\det\Delta_n}{|V_n|} \quad \text{ or }\quad \tau(G_n)=\left(\frac{\prod_i d_i}{\sum_i d_i}\right) \det\mathcal{L}_n,$$ where $d_i$ are the vertex degrees. One can further introduce the \emph{asymptotic complexity constant} of $(G_n)$, studied in \cite{LY05},
$$c=\lim_{n \rightarrow \infty} \frac{\log{\tau(G_n)}}{|V_n|}$$
provided that the limit exists.

For fractal graphs admitting spectral decimation, the determinant of the graph Laplacians, as well as the asymptotic complexity constant, has been evaluated in \cite{anema-tsougkas2016}. The key insight is that one can split the eigenvalues into two disjoint finite sets $A$ and $B$. If the rational function associated with spectral decimation is of the form $R(z)=\frac{P(z)}{Q(z)}$ with degree $d$, and $P_d$ is the leading coefficient of $P$, then
\begin{equation} {\label{eq:discretedet}}
\det\mathcal{L}_n=\left(\prod_{\alpha\in A}\alpha^{\alpha_n}\right)\left[\prod_{\beta\in B}\left(\beta^{\sum_ {k=0}^{n}{\beta_n^k}}\left(\frac{-Q(0)}{P_d}\right)^{\sum_{k=0}^n\beta_n^k\left(\frac{d^k-1}{d-1}\right)}\right)\right].
\end{equation}
where $\alpha_n=mult_n(\alpha)$ and $\beta_n^k=mult_nR^{-k}(\beta)$, where $mult_n(\lambda)$ denotes the multiplicity of the eigenvalue $\lambda$ of $\mathcal{L}_n$.
We refer the reader to \cite{anema-tsougkas2016}, \cite{bajorin2008vibration} and \cite{malozemov2003self} for details.

\section{Zeta function of the Diamond fractal}  \noindent

\begin{figure}[H]
\centering
\includegraphics[scale=0.3]{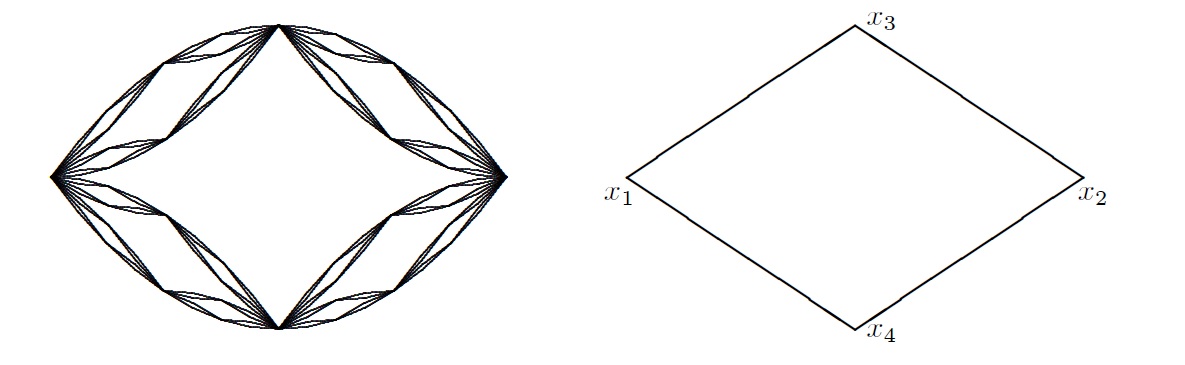}
\caption{The Diamond fractal and its $G_1$ approximating graph.}
\label{fig:diamond}
\end{figure}

The Diamond fractal has been recently studied due to its connections related to physics. In this section we show that it is a fractal with spectral zeta function such that it has no poles on the imaginary axis. Moreover, we establish a connection between the discrete and continuous determinants of the Laplacian bearing some resemblance to \cite{karl}. 
\begin{prop}
The spectral zeta function of the Diamond fractal factorizes as follows
$$\zeta_{\mathcal{L}}(s)=\frac{4^s(4^s-1)}{6}\left( \frac{4}{4^s-4}+\frac{2}{4^s-1}\right)\zeta_{\Phi,0}(s)$$
and thus has no poles on the imaginary axis. Its regularized determinant is 
$$\det\mathcal{L}=2^{\frac{5}{9}}$$
\end{prop}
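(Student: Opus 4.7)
The plan is to leverage the spectral decimation available for the Diamond fractal together with the general formalism for spectral zeta functions reviewed in Section 2. First, I would pin down the decimation data: the polynomial $R$ (of degree $d=2$, with $R'(0)=\lambda=4$ and leading coefficient $a_d$), the exceptional set $A$, and the multiplicities of the ``new'' eigenvalues appearing at each graph approximation $G_n$. For the Diamond fractal the combinatorics are especially clean: the new Neumann eigenvalues at level $n$ split into two families whose multiplicities grow by factors of $4$ and $1$ respectively (which is what will account for the two denominators $4^s-4$ and $4^s-1$ in the target formula), and the only exceptional value that needs to be tracked on the preimage side is $w=0$.

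With this in hand, I would write the spectral zeta function as a sum over decimation levels. Each nonzero Laplace eigenvalue corresponds to a scaled preimage $\lambda^n z$ with $z\in R^{-n}(0)$, so each family contributes a geometric series in $4^{-s}$, weighted by its multiplicities, times the polynomial zeta function $\zeta_{\Phi,0}(s)$. Summing the two geometric series in closed form, and assembling them over a common denominator, yields the identity
$$\zeta_{\mathcal{L}}(s)=\frac{4^s(4^s-1)}{3}\left(\frac{4}{4^s-4}+\frac{2}{4^s-1}\right)\zeta_{\Phi,0}(s).$$
The absence of poles on the imaginary axis is then automatic: $\zeta_{\Phi,0}$ has none by the general result quoted in Section 2, the factor $4^s-4$ is nonvanishing on ${\rm Re}(s)=0$, and the pole of $\frac{2}{4^s-1}$ at $s=0$ is removed by the zero of $(4^s-1)$ in the prefactor.

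To evaluate $\det\mathcal{L}=e^{-\zeta_{\mathcal{L}}'(0)}$, write $\zeta_{\mathcal{L}}(s)=F(s)\zeta_{\Phi,0}(s)$, where $F$ is the rational prefactor extended continuously through $s=0$ via its removable singularity. Then
$$\zeta_{\mathcal{L}}'(0)=F'(0)\,\zeta_{\Phi,0}(0)+F(0)\,\zeta_{\Phi,0}'(0),$$
and plugging in the values $\zeta_{\Phi,0}(0)=1$ and $\zeta_{\Phi,0}'(0)=\frac{\log a_d}{d-1}$ from Section 2, together with the elementary computation of $F(0)$ and $F'(0)$ from the Laurent expansion of $F$ near $s=0$, should give $\zeta_{\mathcal{L}}'(0)=\frac{10}{9}\log 2$, equivalently $\det\mathcal{L}=2^{-10/9}$.

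The main obstacle is the first step: correctly cataloguing the new eigenvalues produced at each decimation level, together with their multiplicities and whether the corresponding $z$-values are genuine or forbidden for the iterated preimage scheme. Once that bookkeeping is pinned down, summing the two geometric series and extracting $\zeta_{\mathcal{L}}'(0)$ is routine calculus, and the stated factorization and determinant value follow.
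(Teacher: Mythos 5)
Your overall strategy---spectral decimation, geometric series over the decimation levels, the known values $\zeta_{\Phi,0}(0)=1$ and $\zeta_{\Phi,0}'(0)=\frac{\log a_d}{d-1}$, and the product rule at $s=0$---is the same as the paper's, and the determinant computation at the end is routine once the factorization is in hand. But there is a genuine gap exactly where you say the ``main obstacle'' lies, and your framing of that step is off in a way that matters. For the Diamond fractal the eigenvalues are \emph{not} naturally indexed by preimages of $0$: spectral decimation (from \cite{bajorin2008vibration}) gives $R(z)=2z(2+z)$, $\lambda=4$, and every eigenvalue of the form $-4^m\lim_n 4^nR^{-n}(-1)$ with $\mathrm{mult}_n(1)=\frac{4^n+2}{3}$. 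Summing the geometric series against this multiplicity gives $\zeta_{\mathcal{L}}(s)=\frac{1}{3}\bigl(\frac{4}{4^s-4}+\frac{2}{4^s-1}\bigr)\zeta_{\Phi,-1}(s)$, i.e.\ the two denominators come from splitting $\frac{4^n+2}{3}$, not from two families of eigenvalues, and the polynomial zeta function that appears is $\zeta_{\Phi,-1}$, not $\zeta_{\Phi,0}$. If you literally sum scaled preimages of $0$ as you propose, you obtain the formula \emph{without} the prefactor $4^s(4^s-1)$, and then the poles at $4^s=1$ on the imaginary axis do not cancel---so the central claim of the proposition fails.

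The missing idea is the conversion of $\zeta_{\Phi,-1}$ into $\zeta_{\Phi,0}$ via the functional equation $\Phi(4z)=2\Phi(z)(2+\Phi(z))$: one checks that $\Phi(z)=-1\Leftrightarrow\Phi(4z)=-2$ and $\Phi(z)=-2\Leftrightarrow\Phi(4z)=0$ with $\Phi(z)\neq 0$, which yields $\zeta_{\Phi,-1}(s)=4^s\zeta_{\Phi,-2}(s)$ and $\zeta_{\Phi,-2}(s)=(4^s-1)\zeta_{\Phi,0}(s)$. This two-step shift is precisely what produces the factor $4^s(4^s-1)$ and hence the cancellation on the imaginary axis (note also that the cancellation is needed at \emph{all} points $4^s=1$, i.e.\ $s=\frac{ik\pi}{\log 2}$, not only at $s=0$ as you state). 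Once this identity is supplied, your remaining steps---checking that $4^s-4$ does not vanish on $\mathrm{Re}(s)=0$, that $\zeta_{\Phi,0}$ has its poles only on $\mathrm{Re}(s)=\frac{\log d}{\log\lambda}$, and computing $F(0)=\frac{2}{3}$, $F'(0)=\frac{4\log 2}{9}$ to get $\zeta_{\mathcal{L}}'(0)=\frac{10}{9}\log 2$---go through as you describe.
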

\begin{proof}
For the Diamond fractal spectral decimation has been done in \cite{bajorin2008vibration}, and it was obtained that 
$$ R(z)=2z(2+z) \text{ and } \lambda = R'(0)=4.$$
Then $\Phi$ satisfies the functional equation $\Phi(\lambda z)=R(\Phi (z))$, and thus we have that $\Phi(4z)=2\Phi(z)(2+\Phi(z))$. This allows us to say that
$$ \Phi(z)=-1 \Leftrightarrow \Phi(4z)=-2$$
and
$$ \Phi(z)=-2 \Leftrightarrow \Phi(4z)=0 \text{ and } \Phi(z) \neq 0.$$
We have that every eigenvalue of $\mathcal{L}$ is of the form $-4^m \lim\limits_{n\rightarrow \infty} 4^nR^{-n}(-1)$ and $mult_n(1)=\frac{4^n+2}{3}$. Thus we get that
$$\zeta_{\mathcal{L}}(s)= \sum_{n=1}^{\infty} \left(\frac{4^n+2}{3}\right) 4^{-ns} \zeta_{\Phi,-1}(s)=\frac{1}{3}\left( \frac{4}{4^s-4}+\frac{2}{4^s-1}\right)\zeta_{\Phi,-1}(s).$$
We will show that in fact we have no poles on the imaginary axis because of cancellations from the solutions of $4^s=1$.
By using the observations above, we obtain that
\begin{equation*}
\begin{split}
\zeta_{\Phi,-1}(s)=& \sum\limits_{\substack{\Phi(-\mu)=-1 \\ \mu >0}} \mu^{-s}=\frac{1}{2}\sum\limits_{\substack{\Phi(-4\mu)=-2 \\ \mu >0}} \mu^{-s}= \frac{1}{2}4^s \sum\limits_{\substack{\Phi(-4\mu)=-2 \\ \mu >0}} (4\mu)^{-s}=\frac{1}{2} 4^s \zeta_{\Phi,-2}(s) \\
\end{split}
\end{equation*} 
and
\begin{equation*}
\begin{split}
\zeta_{\Phi,-2}(s)=& \sum\limits_{\substack{\Phi(-\mu)=-2 \\ \mu >0}} \mu^{-s}=\sum\limits_{\substack{\Phi(-4\mu)=0 \\ \Phi(-\mu) \neq 0 \\\mu >0}} \mu^{-s}=  \sum\limits_{\substack{\Phi(-4\mu)=0 \\ \mu >0}} \mu^{-s}-\sum\limits_{\substack{\Phi(-\mu)=0 \\ \mu >0}} \mu^{-s}\\
&=4^s\sum\limits_{\substack{\Phi(-4\mu)=0 \\ \mu >0}} (4\mu)^{-s}-\sum\limits_{\substack{\Phi(-\mu)=0 \\ \mu >0}} \mu^{-s}= (4^s-1) \zeta_{\Phi,0}(s) 
\end{split}
\end{equation*} 
From this it follows that
$$\zeta_{\mathcal{L}}(s)=\frac{4^s(4^s-1)}{6}\left( \frac{4}{4^s-4}+\frac{2}{4^s-1}\right)\zeta_{\Phi,0}(s)$$
which proves that there are no poles on the imaginary axis.

Now by differentiating and using the fact that $\zeta_{\Phi ,0}(0)=-1$ and that $\zeta'_{\Phi , 0}(0)=-\frac{\log{a_d}}{d-1}=-\log{2}$, we obtain $\zeta'_{\mathcal{L}}(0)=-\frac{5}{9}\log{2}$. But due to the discussion in the introduction, this essentially means that the absence of poles on the imaginary axis allows us to interpret this as the regularized product of the eigenvalues, and thus
$$\det\mathcal{L}=e^{-\zeta'_{\mathcal{L}}(0)}=2^{\frac{5}{9}}.$$

\end{proof}

\begin{remark}
At first glance the complex dimensions would be located at the positions such that $4^s=4$ which are $s=1+\frac{ik\pi}{\log{2}}$  and at ${\rm Re}(s)=\frac{\log{2}}{\log{4}}=\frac{1}{2}$ due to the poles of the polynomial zeta functions. However, it was shown in \cite{teplstei} that the the complex dimensions can only be on the imaginary axis, which we have proven is not the case, and at ${\rm Re}(s)=\frac{d_s}{2}=1$ and thus, we can deduce that all the poles of the polynomial zeta functions must be canceled by the zeros of the geometric part, which we observe that is indeed the case for ${\rm Re}(s)=\frac{1}{2}$.
\end{remark}
\begin{remark}
The value $\log{2}$ can also be interpreted as the tree entropy or the asymptotic complexity constant of the sequence of the fractal graphs approximating the Diamond fractal. Thus $\log{\det\mathcal{L}}=\frac{5}{9}c$.
\end{remark}

For the diamond fractal it has been calculated in \cite{anema-tsougkas2016} that $\det\mathcal{L}_n=2^{-\frac{1}{9}(2\cdot 4^n-6n-11)}$. Then using the fact that $\log{\det\mathcal{L}}=\frac{5}{9}\log{2}$ we conclude that
$$\log{\det\mathcal{L}_n}=\frac{1}{5}(-2\cdot 4^n +6n+11) \log{\det\mathcal{L}}$$
We can see that the regularized determinant does not appear as an exact constant as in \cite{karl}, despite the fact that there are no poles on the imaginary axis. The resemblance that appears here must be attributed to numerical coincidence.

In \cite{dunne2012heat} there is also a formula involving the Riemann zeta function for the diamond fractals with spectral zeta function 
$$\zeta_D(s)=\frac{\zeta_R(2s)}{\pi^{2s}}l^{d_f-1}\left( \frac{1-l^{1-d_ws}}{1-l^{d_f-d_ws}}  \right)$$
where $\zeta_R$ is the Riemann zeta function, $l$ is a side length constant, and $d_w$ and $d_f$ are the walk and Hausdorff dimensions correspondingly.

\section{Zeta function of double Sierpi\'nski gaskets} \noindent

The spectral zeta functions for Dirichlet and Neumann boundary conditions for the standard self-similar Laplacian on the standard two dimensional Sierpi\'nski gasket have been calculated in \cite{grabner}, \cite{teplyaev} as follows:
\begin{equation*}
\begin{split}
\zeta_{\mathcal{L}}^D(s)=&5^{-s}\zeta_{\Phi ,-2}(s)+\left( \frac{3}{2(5^s-3)}-\frac{3}{2(5^s-1)} \right)5^{-s}\zeta_{\Phi ,-3}(s) \\ 
& +\left( \frac{1}{2(5^s-3)}+\frac{3}{2(5^s-1)} \right)\zeta_{\Phi ,-5}(s)
\end{split}
\end{equation*}
and
$$\zeta_{\mathcal{L}}^N(s)=\left( \frac{1}{2(5^s-3)}+\frac{3}{2(5^s-1)} \right)\zeta_{\Phi ,-3}(s)
 +\left( \frac{3\cdot 5^{-s} }{2(5^s-3)}-\frac{5^{-s}}{2(5^s-1)} \right)\zeta_{\Phi ,-5}(s).$$
 
Notice that the poles on the imaginary axis appear to be at the points such that $5^s=1$. However, some are canceled out by the observation in \cite{grabner} that $\zeta_{\Phi , -5}(s)=(5^s-1) \zeta_{\Phi ,0}(s)$. Unfortunately a similar argument cannot work for $\zeta_{\Phi ,-3}(s)$, and numerical calculations by the authors in \cite{grabner} indicate that we indeed have poles at $5^s=1$.

In \cite{fractafold} the double Sierpi\'nski gasket was defined. Essentially, it is the fractal created by taking two copies of the regular Sierpi\'nski gasket and gluing them at the boundary. Then it becomes a fractal without boundary, and its graph approximations are $4$-regular graphs.

\begin{figure}[H]
\centering
\includegraphics[scale=0.5]{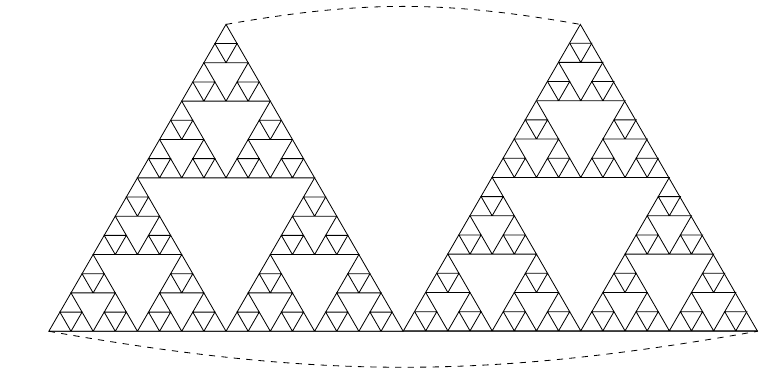}
\caption{The approximating graph $G_4$ of the double Sierpi\'nski gasket. Points connected by a dashed line are identified.}
\label{fig:dsg}
\end{figure}

We can also consider higher dimensional analogues of the Sierpi\'nski gasket. Denote $SG^N$ to be the $N-1$ dimensional Sierpi\'nski gaskets as in \cite{fukushima1992spectral}. The time-scaling factor is then $\lambda=N+2$. The Dirichlet spectral zeta function is evaluated in \cite{grabner} 
\begin{equation*}
\begin{split}
\zeta_{\mathcal{L}}^D(s)&=\lambda^{-s}\zeta_{\Phi,-2}(s)+\left( \frac{(N-1)^2-1}{2(\lambda^{s}-N)}-\frac{N}{2(\lambda^{s}-1)} \right)\lambda^{-s} \zeta_{\Phi, -N}(s)\\
&+\left( \frac{N-2}{2(\lambda^{s}-N)}+\frac{N}{2(\lambda^{s}-1)} \right) \zeta_{\Phi,-(N+2)}(s)
\end{split}
\end{equation*}
and by using explicit knowledge of the Neumann spectrum from \cite{bajorin2008vibration},\cite{fukushima1992spectral} we can also compute the Neumann spectral zeta function to be
\begin{equation*}
\begin{split}
\zeta_{\mathcal{L}}^N(s)&=\left(\frac{N-2}{2(\lambda^s-N)}+\frac{N}{2(\lambda^s-1)}\right)\zeta_{\Phi, -N}(s)\\
&+\left( \frac{N(N-2)}{2(\lambda^s-N)}-\frac{N-2}{2(\lambda^s-1)}\right)\lambda^{-s}\zeta_{\Phi,-(N+2)}(s).
\end{split}
\end{equation*}

We can now create the double $SG^N$ by taking two copies of $SG^N$ and gluing them together at the respective boundary points, making the appropriate $N$ identifications. Then it becomes a fractal without boundary, and the spectrum of the Laplace operator is the union of the Dirichlet and Neumann spectra with added multiplicities.

\begin{prop}

The spectral zeta function of the double $N-1$ dimensional Sierpi\'nski gasket has no poles on the imaginary axis. Its regularized determinant is

$$\det\mathcal{L}=\frac{2N^{\frac{1}{N-1}}}{(N+2)^{\frac{N-2}{N-1}}}.$$

\end{prop}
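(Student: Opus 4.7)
The plan rests on exploiting the $\mathbb{Z}/2\mathbb{Z}$ involution $\sigma$ of the double Sierpi\'nski gasket $M$ that swaps the two glued copies of $SG$; its fixed-point set is the three boundary vertices identified in the gluing. Eigenfunctions of $\mathcal{L}$ decompose into even and odd parts under $\sigma$. An even eigenfunction is determined by its restriction to one copy, and the matching of Dirichlet forms across the identified corners (where by symmetry the two normal derivatives are equal and must sum to zero) forces that restriction to be a Neumann eigenfunction on $SG$. An odd eigenfunction vanishes at the fixed points of $\sigma$, so its restriction is a Dirichlet eigenfunction on $SG$. Since the even/odd lift from one copy is unique, the spectra match with multiplicity and
$$\zeta_{\mathcal{L}}^{\text{double}}(s) = \zeta_{\mathcal{L}}^{D}(s) + \zeta_{\mathcal{L}}^{N}(s),$$
with the zero eigenvalue of $M$ (the constants) automatically excluded since it is already excluded from $\zeta_{\mathcal{L}}^{N}$.

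The next step is to add the two formulas from the beginning of this section and collect the coefficients of $\zeta_{\Phi,-2}$, $\zeta_{\Phi,-3}$, $\zeta_{\Phi,-5}$. Using $1-5^{-s}=(5^s-1)/5^s$, the combined coefficient of $\zeta_{\Phi,-3}$ should simplify to
$$\frac{3\cdot 5^{-s}+1}{2(5^s-3)}+\frac{3\cdot 5^{-s}}{2},$$
since the $1/(5^s-1)$ contributions from the Dirichlet and Neumann terms carry opposite signs (up to a factor of $5^{-s}$) and cancel. The combined coefficient of $\zeta_{\Phi,-5}$ still contains $1/(5^s-1)$, but the identity $\zeta_{\Phi,-5}(s)=(5^s-1)\zeta_{\Phi,0}(s)$ recalled in the previous paragraphs absorbs it into a factor regular at $5^s=1$. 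The only singularities that then remain are at $5^s=3$, which is off the imaginary axis, so $\zeta_{\mathcal{L}}^{\text{double}}$ is regular there.

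For the determinant I would differentiate the simplified expression at $s=0$, using $\zeta_{\Phi,w}(0)=0$ for $w<0$, $\zeta_{\Phi,0}(0)=1$, and the values of $\zeta'_{\Phi,w}(0)$ recorded in the introduction. Because two of the three zeta factors vanish at $s=0$, each application of the product rule reduces to a single nonvanishing term, and a direct calculation should yield $\zeta'_{\mathcal{L}}(0)=\log 2+\frac{1}{2}\log(3/5)$, giving $\det\mathcal{L}=e^{-\zeta'_{\mathcal{L}}(0)}=\frac{1}{2}\sqrt{5/3}$.

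The main obstacle I foresee is the algebraic bookkeeping in the pole-cancellation step: one has to verify regularity at every $s=2\pi ik/\log 5$ on the imaginary axis, not just at $s=0$, and correctly track how the $\zeta_{\Phi,-5}\to(5^s-1)\zeta_{\Phi,0}$ substitution alters the derivative computation. The L'H\^opital-type limit needed to evaluate the simplified $\zeta_{\Phi,0}$ coefficient and its derivative at $s=0$ is the most error-prone arithmetic step, as is the bookkeeping of the geometric prefactors at $5^s=3$ to be sure they play no role on the imaginary line.
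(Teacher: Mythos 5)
Your proposal is correct and follows essentially the same route as the paper: the paper simply cites \cite{fractafold} for the fact that the spectrum of the double gasket is the union (with added multiplicities) of the Dirichlet and Neumann spectra of $SG$ — which you instead justify via the even/odd decomposition under the swap involution — and then, exactly as you do, adds the two zeta functions, cancels the $5^s=1$ poles in the $\zeta_{\Phi,-3}$ coefficient directly and in the $\zeta_{\Phi,-5}$ coefficient via $\zeta_{\Phi,-5}(s)=(5^s-1)\zeta_{\Phi,0}(s)$, and differentiates at $s=0$ using $\zeta'_{\Phi,-2}(0)=\log 2$, $\zeta'_{\Phi,-3}(0)=\log 3$, $\zeta'_{\Phi,0}(0)=0$ to get $\zeta'_{\mathcal{L}}(0)=\log 2+\tfrac12\log 3-\tfrac12\log 5$. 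Your simplified coefficient of $\zeta_{\Phi,-3}$ and your final value agree with the paper's equation \eqref{eq:zeta} and conclusion.
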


\begin{proof}

The spectral zeta function for the double $SG^N$ is the sum of the Dirichlet and Neumann spectral zeta functions of the single gaskets, and thus becomes
\begin{equation*}
\zeta_{\mathcal{L}} (s) = \lambda^{-s} \zeta_{\Phi,-2}(s)+ \frac{N(\lambda^s-1)-\lambda^s}{\lambda^s(\lambda^s-N)}\zeta_{\Phi,-N}(s)
+\frac{(N-1)(\lambda^s-2)}{(\lambda^s-1)(\lambda^s-N)}\zeta_{\Phi,-(N+2)}(s).
\end{equation*}
But since as in \cite{grabner} we have that $\zeta_{\Phi,-(N+2)}(s)=(\lambda^s-1)\zeta_{\Phi,0}(s)$, we see that we don't have any poles on the imaginary axis, which allows us to have the interpretation of a regularized determinant. By differentiating the formula above and taking into account that the spectral decimation function is $R(z)=z(N+2+z)$ with $d=2$, $a_d=1$ and also that 
\begin{equation*}
\begin{split}
&\zeta_{\Phi,0}(0)=-1 \hspace{0.695cm} \text{ and } \quad \zeta'_{\Phi,0}(0)=0\\
&\zeta_{\Phi,-N}(0)=0 \hspace{0.37cm} \text{ and } \quad \zeta'_{\Phi,-N}(0)=-\log{N}\\
&\zeta_{\Phi,-2}(0)=0  \hspace{0.48cm} \text{ and } \quad \zeta'_{\Phi,-2}(0)=-\log{2}
\end{split}
\end{equation*}
we obtain that
$$\zeta'(0)=-\log{2}-\frac{1}{N-1}\log{N}+\frac{N-2}{N-1}\log{(N+2)}$$
and therefore
\begin{equation}{\label{eq:logdetN}}
\log{\det\mathcal{L}}=\log{2}+\frac{1}{N-1}\log{N}-\frac{N-2}{N-1}\log{(N+2)}
\end{equation}
from which the result follows.

\end{proof}
\begin{remark}
As in the case of the Diamond fractal, the zeros of the geometric part cancel all the poles of the polynomial zeta functions, and the only poles that remain are at ${\rm Re}(s)=\frac{d_s}{2}=\frac{\log{N}}{\log{(N+2)}}$.
\end{remark}

We establish now a result analogous to \cite{karl}.
\begin{cor}
For the discrete combinatorial graph Laplacian determinant of the double $SG^N$, we have that
$$\log{\det\Delta_n}=c|V_n|+n\log{(N+2)}+ \log{\det\mathcal{L}},$$
where $c$ is the asymptotic complexity constant which is
$$c=\frac{N-2}{N}\log{2}+\frac{N-2}{N-1}\log{N}+\frac{N-2}{N(N-1)}\log{(N+2)}.$$

\end{cor}

\begin{proof}

By using \eqref{eq:discretedet}, the fact that the spectrum is the union of the Dirichlet and Neumann spectra with added multiplicities, and the eigenvalue multiplicities computed at \cite{fukushima1992spectral} and \cite{shima1991eigenvalue}, we can evaluate that 
$$\det\Delta_n=2^{(N-2)N^n+1}\cdot N^{\frac{(N-2)N^{n+1}+1}{N-1}}\cdot (N+2)^{\frac{(N-2)N^n+n(N-1)-N+2}{N-1}}$$
By using Kirchoff's Matrix-Tree theorem and also the fact that the number of vertices for the double Sierpi\'nski gasket graphs is $|V_n|=N^{n+1}$, we get that the number of spanning trees is 
$$\tau(G_n)=2^{(N-2)N^n+1}\cdot N^{\frac{(N-2)N^{n+1}-(N-1)(n+1)+1}{N-1}}\cdot (N+2)^{\frac{(N-2)N^n+n(N-1)-N+2}{N-1}}$$
and therefore the asymptotic complexity constant is 
$$c=\frac{N-2}{N}\log{2}+\frac{N-2}{N-1}\log{N}+\frac{N-2}{N(N-1)}\log{(N+2)}.$$
Using \eqref{eq:logdetN} we obtain the result.

\end{proof}

\begin{remark}
By this formula we can see the connection between different ``discrete and continuous" determinants. In fact, the asymptotic complexity constant can also be interpreted as a determinant, namely, a Fuglede--Kadison determinant. We refer the reader to \cite{fugledekadison},\cite{LY05} for more details. We then have a connection between ``discrete and continuous" determinants of the form
$$\log{\det\Delta_n}=\log{\text{Det}\Delta}\,|V_n|+n\log{(N+2)}+ \log{\det\mathcal{L}},$$
where $\text{Det}\Delta$ is the Fuglede--Kadison determinant.

\end{remark}

\begin{remark}
By using Kirchoff's Matrix-Tree theorem and the above calculations, we can also calculate the number of spanning trees for the single $N-1$ dimensional Sierpi\'nski gasket, confirming the formula conjectured in \cite{CCY07} and first proven via a different methodology in \cite{TeWa11}. The asymptotic complexity constant for the single and double pre-fractal Sierpi\'nski graphs are the same.

\end{remark}

\section{Zeta function of the double pq-model on the unit interval} \noindent

In \cite{strichartz2006differential} the unit interval can be realized as a p.c.f. self-similar set with two contractions. Then the standard self-similar measure is the Lebesgue measure, and the fractal Laplacian coincides with the standard $-\frac{d^2}{dx^2}$ operator. However, in \cite{teplyaev} a different fractal Laplacian on the unit interval has been constructed.
Let $0<p<1$ and $q=1-p$. Define contraction factors
$$r_1=r_3=\frac{p}{1+p} \quad \text{ and } \quad r_2=\frac{q}{1+p}$$
and measure weights
$$m_1=m_3=\frac{q}{1+q} \quad \text{ and } \quad m_2=\frac{p}{1+q}$$
and observe that $m_1+m_2+m_3=r_1+r_2+r_3=1$. We define the contractions $F_i:\mathbb{R} \rightarrow \mathbb{R}$ for $i=1,2,3$ as $F_i(x)=r_ix+(1-r_i)p_i$ where $p_i$ is $0,\frac{1}{2},1$, respectively, or equivalently the fixed point of $F_i$. Then the unit interval is the self-similar set created by these contractions, and as usual $V_n=\bigcup F_i (V_{n-1})$ with the boundary being $V_0=\{0,1\}$. As our self-similar probability measure we take the unique measure satisfying $\mu=\sum_{j=1}^3 m_j \mu\circ F_j$. Then we have that
$$\Delta_{\mu}(x)=\lim_{n \rightarrow \infty} (1+\frac{2}{pq})^n \Delta_n f(x)$$
where the discrete graph Laplacians are
$$\Delta_nf(x_k)=
\left\{
	\begin{array}{ll}
		2pf(x_{k-1})+2qf(x_{k+1})-2f(x_k) \\
     \quad \qquad \qquad   \text{or}\\
		2qf(x_{k-1})+2pf(x_{k+1})-2f(x_k)
	\end{array}
\right.$$
This Laplacian corresponds to a random walk as in Figure 3.
\begin{figure}
\begin{center}
\includegraphics[scale=0.85]{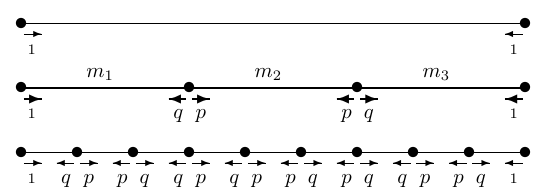}
\caption{The associated random walk of the $pq$-model.}
\end{center}
\end{figure}

Spectral decimation has been carried in \cite{teplyaev} for the Neumann case with rational function $R_p(z)=\frac{1}{pq}z(\frac{z^2}{4}+\frac{3z}{2}+2+pq)$ and the following is obtained.
\begin{prop}
$$\sigma(\Delta_{p,n})=\{0,-4\}\bigcup_{m=0}^{n-1} R_p^{-m}\{-2\pm 2q\}$$
and if $p \neq \frac{1}{2}$ then $d_s=\frac{\log{9}}{\log{(1+\frac{2}{pq})}}<1$ and
$$\zeta_{\mathcal{L}_{\mu}}^N(s)=\frac{1}{\lambda^{s}-1}(\zeta_{\Phi, w_1}(s)+\zeta_{\Phi, w_2}(s))$$ for $\lambda=1+\frac{2}{pq}$ and $w_1,w_2=-2 \pm 2q$.
\end{prop}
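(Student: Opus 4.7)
The plan is to deduce everything from the spectral decimation for $R_p(z) = \frac{1}{pq}z\bigl(\frac{z^2}{4} + \frac{3z}{2} + 2 + pq\bigr)$ carried out in \cite{teplyaev}, and then assemble the zeta function by summing a geometric series. First I would set up the recursive description: every non-exceptional eigenvalue $\mu$ of $\Delta_{p,n+1}$ satisfies $R_p(\mu) \in \sigma(\Delta_{p,n})$, and conversely each $\nu \in \sigma(\Delta_{p,n})$ lifts, with appropriate multiplicity, through the three preimages in $R_p^{-1}(\nu)$, except where the lifting rule degenerates. The values where it degenerates are exactly the forbidden eigenvalues of the spectral-decimation scheme for the $pq$-interval, namely $w_1, w_2 = -2 \pm 2q$. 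The initial spectrum on $V_0 = \{0,1\}$ is $\{0,-4\}$. An induction on $n$, tracking which branches give genuine eigenfunctions and which are replaced by the exceptional pair added at each refinement, produces
$$\sigma(\Delta_{p,n}) = \{0,-4\} \,\cup\, \bigcup_{m=0}^{n-1} R_p^{-m}\{-2+2q,\,-2-2q\}.$$

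Next I would verify the spectral dimension. Since $\deg R_p = 3$ and $\lambda := R_p'(0) = 1 + \frac{2}{pq}$, the general formula from \cite{grabner,teplyaev} gives $d_s = \frac{2\log 3}{\log\lambda} = \frac{\log 9}{\log(1+2/(pq))}$. For $p \neq \tfrac12$ we have $p \neq q$, so by AM--GM $pq < \tfrac14$ strictly, whence $\lambda > 9$ and therefore $d_s < 1$ as claimed.

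For the zeta function I would start from the definition, use the spectral-decimation identification that each nonzero eigenvalue of the continuous Laplacian $\mathcal{L}_\mu$ has the form $-\lambda^m \lim_{n\to\infty}\lambda^n R_p^{-n}(w)$ for some $m \geq 0$ and $w \in \{w_1,w_2\}$, and insert the spectrum formula above. The $\lambda^m$-rescaling and the definition
$$\zeta_{\Phi,w}(s) = \lim_{n\to\infty}\sum_{z \in R_p^{-n}(w)} (\lambda^n z)^{-s}$$
from Section 2 factor out a geometric series $\sum_{m\ge 0}\lambda^{-(m+1)s} = \frac{\lambda^{-s}}{1-\lambda^{-s}} = \frac{1}{\lambda^s - 1}$, yielding
$$\zeta_{\mathcal{L}_\mu}^N(s) = \frac{1}{\lambda^s - 1}\bigl(\zeta_{\Phi,w_1}(s) + \zeta_{\Phi,w_2}(s)\bigr).$$

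The main obstacle is the combinatorial bookkeeping in the first step: one must confirm which of the three preimage branches at each level produces a genuine discrete eigenfunction and which must be discarded in favor of the exceptional pair, and check that $w_1,w_2$ enter with the correct multiplicity one at their initial level, so that the iterated preimages combine into the claimed disjoint union with no overcounting and no missing eigenvalues. Once this is in place, the spectral-dimension estimate and the geometric-series rearrangement are routine.
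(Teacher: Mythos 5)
Your proposal matches the paper's treatment: the spectrum formula and the decimation function are taken from the cited spectral-decimation analysis of the $pq$-model, the bound $d_s<1$ follows from $pq<\tfrac14$ for $p\neq\tfrac12$ so that $\lambda>9$, and the zeta function is assembled exactly as you describe by noting that $w_1,w_2=-2\pm 2q$ each occur with multiplicity one at every level $m\geq 1$ and summing the geometric series $\sum_{m\geq 1}\lambda^{-ms}=\frac{1}{\lambda^s-1}$. The combinatorial bookkeeping you flag as the main obstacle is precisely the part the paper delegates to the reference rather than reproving, so your outline is faithful to the intended argument.
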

We can easily see that $R^{-1}(0)=\{0,-2-2p,-2-2q\}$ and $R^{-1}(-4)=\{-4,-2+2p,-2+2q\}$, and thus the spectrum is obtained as follows
\newline
\begin{center}
\includegraphics[scale=0.45]{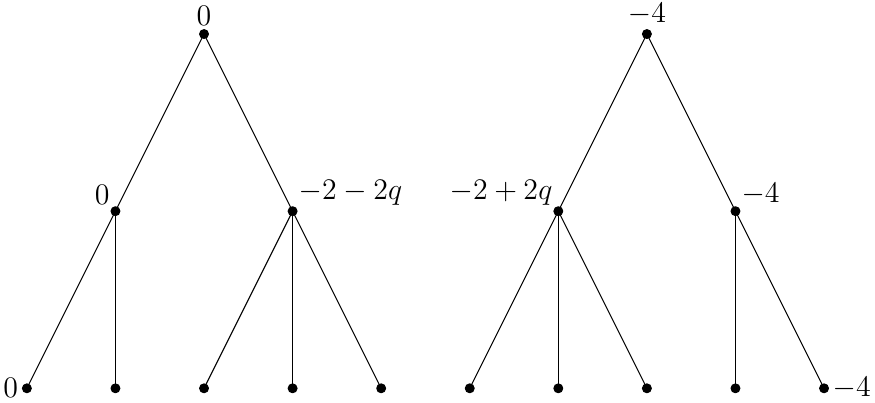}
\end{center}
This calculation for the spectral zeta function was omitted in \cite{teplyaev}. We clarify it here and also fix a typo in the formula.
Both eigenvalues $-2+2q$ and $-2-2q$ appear with multiplicity $1$ for $m\geq 1$,
and thus
$$\zeta_{\mathcal{L}}(s)=\sum_{m=1}^{\infty}\lambda^{-ms} \zeta_{\Phi, -2-2q}(s)+\sum_{m=1}^{\infty}\lambda^{-ms} \zeta_{\Phi, -2+2q}(s).$$
$$=\frac{\lambda^{-s}}{1-\lambda^{-s}}(\zeta_{\Phi, -2-2q}(s)+\zeta_{\Phi, -2+2q}(s))$$
Now, we mimic the construction of the double Sierpi\'nski gasket and glue this model with a copy of itself at the two boundary points. Its spectrum is again the union of the Dirichlet and Neumann spectra of the single case, and we get the following.
\begin{prop}
The spectral zeta function for the double $pq$-model is given by
$$\zeta_{\mathcal{L}_{\mu}}(s)=\zeta_{\Phi, 0}(s)+\zeta_{\Phi,-4}(s)$$
and thus it has no poles on the imaginary axis. Its regularized determinant is
$$\det\mathcal{L}_{\mu}=\frac{1}{pq}.$$
\end{prop}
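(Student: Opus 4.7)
The plan is to follow the strategy the authors have set up: since the spectrum of the double $pq$-model is the union (with multiplicities added) of the Dirichlet and Neumann spectra of a single copy, $\zeta_{\mathcal{L}_{\mu}}(s) = \zeta^D(s) + \zeta^N(s)$, and the task reduces to exploiting the functional equation $\Phi(\lambda z) = R_p(\Phi(z))$ to simplify the sum.

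First I would redo the spectral decimation analysis of \cite{teplyaev} under Dirichlet rather than Neumann boundary conditions. The symmetry interchanging the two endpoints essentially swaps the roles of $p$ and $q$, so I expect the Dirichlet spectrum of the level-$n$ graph to take the form $\bigcup_{m=0}^{n-1}R_p^{-m}\{-2\pm 2p\}$, producing
$$\zeta^D(s) = \frac{1}{\lambda^s - 1}\bigl(\zeta_{\Phi,-2-2p}(s) + \zeta_{\Phi,-2+2p}(s)\bigr).$$
A direct check using $p+q=1$ confirms that $R_p(-2-2p)=R_p(-2-2q)=0$ and $R_p(-2+2p)=R_p(-2+2q)=-4$; in other words the four ``level-one'' eigenvalues together exhaust $R_p^{-1}(0)\cup R_p^{-1}(-4)$ save for the two fixed points $0$ and $-4$.

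The algebraic cancellation then proceeds as in the diamond-fractal argument. From the functional equation, the map $\mu\mapsto\lambda\mu$ is a bijection $\{\mu>0:\Phi(-\mu)\in R_p^{-1}(w)\}\leftrightarrow\{\nu>0:\Phi(-\nu)=w\}$, yielding
$$\zeta_{\Phi,w}(s) = \lambda^{-s}\sum_{w'\in R_p^{-1}(w)}\zeta_{\Phi,w'}(s).$$
Applied with $w=0$ and $w=-4$ and rearranged to move the fixed-point contributions to the left, this gives
\begin{align*}
(\lambda^s-1)\,\zeta_{\Phi,0}(s) &= \zeta_{\Phi,-2-2p}(s) + \zeta_{\Phi,-2-2q}(s),\\
(\lambda^s-1)\,\zeta_{\Phi,-4}(s) &= \zeta_{\Phi,-2+2p}(s) + \zeta_{\Phi,-2+2q}(s).
\end{align*}
Summing and dividing by $\lambda^s-1$ exactly reproduces $\zeta^D(s)+\zeta^N(s)$, giving the claimed factorization $\zeta_{\mathcal{L}_{\mu}}(s)=\zeta_{\Phi,0}(s)+\zeta_{\Phi,-4}(s)$. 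The absence of poles on the imaginary axis is then immediate, since the polynomial zeta functions have their poles confined to the line ${\rm Re}(s)=\log 3/\log\lambda$.

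For the determinant, expanding $R_p(z)=\tfrac{z^3}{4pq}+\tfrac{3z^2}{2pq}+\tfrac{2z}{pq}+z$ gives $d=3$ and $a_d=\tfrac{1}{4pq}$, so $\zeta_{\Phi,0}'(0)=-\tfrac{1}{2}\log(4pq)$ and $\zeta_{\Phi,-4}'(0)=-\tfrac{1}{2}\log(4pq)+\log 4$. Adding, $\zeta_{\mathcal{L}_{\mu}}'(0)=-\log(4pq)+\log 4=-\log(pq)$, hence $\det\mathcal{L}_{\mu}=e^{\log(pq)}=pq$. The main obstacle is the first step: carrying out the Dirichlet spectral decimation carefully, since \cite{teplyaev} treats only the Neumann case, and confirming that the initial eigenvalues are indeed $-2\pm 2p$. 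Once that bookkeeping is verified, the remainder is pure algebraic manipulation followed by a routine differentiation at $s=0$.
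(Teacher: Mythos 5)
Your proposal is correct and follows essentially the same route as the paper: decompose the double model's spectrum as Dirichlet plus Neumann, identify the Dirichlet generating eigenvalues as $-2\pm 2p$, use the functional equation $\Phi(\lambda z)=R_p(\Phi(z))$ together with $R_p^{-1}(0)=\{0,-2-2p,-2-2q\}$ and $R_p^{-1}(-4)=\{-4,-2+2p,-2+2q\}$ to collapse the four polynomial zeta functions into $(\lambda^s-1)(\zeta_{\Phi,0}+\zeta_{\Phi,-4})$, and then differentiate at $s=0$ using $d=3$, $a_d=\tfrac{1}{4pq}$. The only cosmetic difference is that you package the cancellation as a single symmetric identity over each preimage set, whereas the paper isolates $\zeta_{\Phi,-2-2q}$ and $\zeta_{\Phi,-2+2q}$ one at a time; the content is identical.
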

Before we give this proof, we must calculate the Dirichlet spectrum for the single $pq$-model. By solving the Dirichlet eigenvalue equation on the first level we see that the eigenvalues are $-1-p$ and $-1+p$. These eigenvalues are initial and they show up at every level, and we encounter no exceptional eigenvalues by taking their preiterates. This means that the spectrum is of the following form.
\newline
\begin{center}
\includegraphics[scale=0.3]{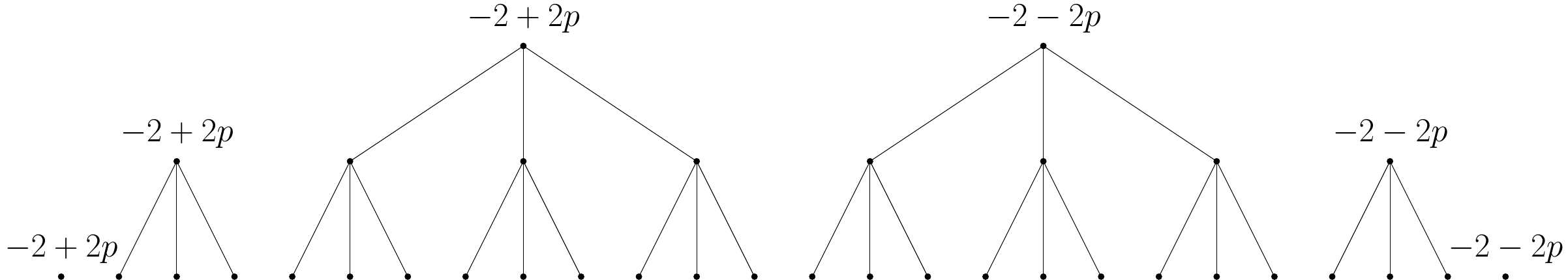}
\end{center}
We have that $\dim\Delta_n=3\dim\Delta_{n-1}-2$ for the Neumann spectrum, and $\dim\Delta_n=3\dim\Delta_{n-1}+2$ for the Dirichlet spectrum. Then the proof of the proposition goes as follows.
\begin{proof}
As in the case of the double Siepri\'nski gasket, it suffices to add the spectral zeta functions of the Neumann and Dirichlet spectrum. Every eigenvalue has multiplicity one, and from the calculations above, we have that the Dirichlet spectral zeta function is
$$\zeta_{\mathcal{L}_{\mu}}(s)=\frac{1}{\lambda^{s}-1}(\zeta_{\Phi, w_3}(s)+\zeta_{\Phi, w_4}(s))$$ where $w_3,w_4=-2 \pm 2p$. Since $\Phi(\lambda z)=R(\Phi(z))$ we can observe that
$$ \Phi(z)=-2-2q \Leftrightarrow \Phi(\lambda z)=0 \enspace \text{ and } \enspace \Phi(z) \neq 0 \enspace \text{ and } \enspace \Phi(z) \neq -2-2p$$
and
$$ \Phi(z)=-2+2q \Leftrightarrow \Phi(\lambda z)=-4 \enspace \text{ and } \enspace \Phi(z) \neq -4 \enspace \text{ and } \enspace \Phi(z) \neq -2+2p .$$
Then, as in the case of the Diamond fractal, we have that
\begin{equation*}
\begin{split}
\zeta_{\Phi,-2-2q}(s)&= \sum_{\substack{\Phi(-\mu)=-2-2q \\ \mu >0}} \mu^{-s}=\quad \sum_{\mathclap{\substack{\Phi(-\lambda\mu)=0 \\ \Phi(-\mu) \neq 0 \\ \Phi(-\mu) \neq -2-2p \text{ and } \mu >0}}} \mu^{-s}\\
&=  \sum\limits_{\substack{\Phi(-\lambda\mu)=0 \\ \mu >0}} \mu^{-s}-\sum\limits_{\substack{\Phi(-\mu)=0 \\ \mu >0}} \mu^{-s} \quad -\sum\limits_{\substack{\Phi(-\mu)=-2-2p \\ \mu >0}} \mu^{-s}\\
&= (\lambda^s-1) \zeta_{\Phi,0}(s)-\zeta_{\Phi, -2-2p}(s) 
\end{split}
\end{equation*} 
and similarly we have that
$$\zeta_{\Phi, -2+2q}(s))=(\lambda^s-1)\zeta_{\Phi ,-4}(s)-\zeta_{\Phi, -2+2p}(s).$$
The result then is obtained by adding the Dirichlet and Neumann spectral zeta functions and the fact that $\zeta'_{\Phi,0}(0)=-\frac{\log{\frac{1}{4pq}}}{2}$ and $\zeta'_{\Phi,-4}(0)=-\frac{\log{\frac{1}{4pq}}}{2}-\log{4}$. 
\end{proof}
\begin{remark}
The location of the poles must necessarily coincide with the location of the poles of the polynomial zeta functions and are thus at ${\rm Re}(s)=\frac{\log{3}}{\log{\lambda}}$.
\end{remark}
Then as in \cite{karl} we establish that the logarithm of the regularized determinant appears as a constant in the logarithm of the determinant of the discrete graph Laplacians.
\begin{cor}For the double $pq$-model we have that
\begin{equation} {\label{eq:doublepq}}
\log{\det\Delta_n}=|V_n|(\log{2}+\frac{\log{(pq)}}{2})+n\log{\frac{(1-q^2)(1-p^2)}{(pq)^2}}+\log{\det\mathcal{L}}
\end{equation}
\end{cor}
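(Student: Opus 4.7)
The plan is to follow the strategy used for the double Sierpi\'nski gasket. From the proof of Proposition 5.3 the spectrum of $\Delta_n$ on the double $pq$-model (each eigenvalue of multiplicity one) is the disjoint union of the Neumann spectrum $\{0,-4\}\cup\bigcup_{k=0}^{n-1}R_p^{-k}\{-2\pm 2q\}$ and the Dirichlet spectrum $\bigcup_{k=0}^{n-1}R_p^{-k}\{-2\pm 2p\}$. Excluding the single zero eigenvalue, this enumerates all $|V_n|-1=2\cdot 3^n-1$ nonzero eigenvalues, so $\log\det\Delta_n$ can be evaluated level by level.

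The computational engine is Vieta's formula applied to the cubic $R_p(z)-w$. Writing $R_p(z)=\tfrac{1}{4pq}\bigl(z^3+6z^2+4(2+pq)z\bigr)$, the monic cubic $z^3+6z^2+4(2+pq)z-4pq\,w$ has constant term $-4pq\,w$, so $\prod_{z\in R_p^{-1}(w)}z=4pq\,w$. Iterating one level at a time gives by induction
\[
\sum_{z\in R_p^{-k}(w)}\log|z|=\tfrac{3^k-1}{2}\log(4pq)+\log|w|.
\]
Summing this over $k=0,\dots,n-1$ for each of the four initial eigenvalues, using the pair products $(-2-2q)(-2+2q)=4(1-q^2)$ and $(-2-2p)(-2+2p)=4(1-p^2)$, and adding the isolated contribution $\log 4$ from the Neumann eigenvalue $-4$, one obtains
\[
\log\det\Delta_n=\log 4+(3^n-1-2n)\log(4pq)+n\log\bigl(16(1-q^2)(1-p^2)\bigr).
\]

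The finish is pure algebra. The $n$-dependent terms combine via $n\log 16-2n\log(4pq)=-2n\log(pq)$ into $n\log\frac{(1-q^2)(1-p^2)}{(pq)^2}$, while the $n$-independent remainder rearranges as $\log 4+(3^n-1)\log(4pq)=3^n\log(4pq)-\log(pq)=|V_n|\bigl(\log 2+\tfrac12\log(pq)\bigr)-\log\det\mathcal{L}$, using $|V_n|=2\cdot 3^n$ and the identification $\det\mathcal{L}=pq$ from Proposition 5.3. This is exactly \eqref{eq:doublepq}.

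The main obstacle here is bookkeeping rather than any deep new ingredient. The paper's preamble gives the level-one Dirichlet graph eigenvalues as $-1\pm p$, whereas the zeta-function formula and the identity we are proving are only consistent with taking $-2\pm 2p$ as the initial Dirichlet values for the operator $\Delta_n$ used in the corollary, so one must first reconcile these conventions (they differ by the overall factor of $2$ between the probabilistic and edge-weighted versions of the Laplacian). One must also verify, for generic $p\neq\tfrac12$, that the preimage trees above $\{-2\pm 2q\}$ and $\{-2\pm 2p\}$ are pairwise disjoint and disjoint from the fixed points $0$ and $-4$ of $R_p$, so that no eigenvalue is double-counted in the union.
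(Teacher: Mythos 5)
Your proof is correct and follows essentially the same route as the paper: the paper substitutes the spectral decimation data ($P_d=\tfrac{1}{4pq}$, $\alpha=-4$, $\beta_i=-2\pm 2q,\ -2\pm 2p$, all multiplicities $1$) into the cited determinant formula \eqref{eq:discretedet} to obtain $\det\Delta_n=2^{2\cdot 3^n}(1-q^2)^{n}(1-p^2)^{n}(pq)^{3^n-2n-1}$, which is exactly the closed form you reach by unpacking that formula through Vieta's theorem applied level by level to $R_p(z)-w$, and the final algebra using $|V_n|=2\cdot 3^n$ and $\det\mathcal{L}=pq$ is identical. Your closing remarks on the factor-of-two normalization of the level-one Dirichlet eigenvalues and on the disjointness of the preimage trees are reasonable sanity checks rather than gaps, since they amount to verifying the hypotheses under which \eqref{eq:discretedet} is applied in the paper.
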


\begin{proof}
First of all, it is easy to calculate that $|V_n|=2\cdot 3^n$. Then, we use \eqref{eq:discretedet} with $P_d=\frac{1}{4pq}$, $	Q_0=1$ and $\alpha=-4$, $\alpha_n=1$, $\beta_1, \beta_2=-2\pm 2q$, $\beta_3, \beta_4 =-2 \pm 2p$ and $mult_n\beta_i=1$ for $n \geq 1$ and we get that
$$\det\Delta_n=4(2-2q^2)^{n}(-4pq)^{ \sum_{k=0}^{n-1} (3^k-1)}(2-2p^2)^{n}(-4pq)^{\sum_{k=0}^{n-1} (3^k-1)}$$
and thus
$$\det\Delta_n=2^{2\cdot 3^n}(1-q^2)^{n}(1-p^2)^{n}(pq)^{(3^n-2n-1)}$$
which gives us that
$$\log{\det\Delta_n}=2\cdot 3^n \log{2}+n(\log{(1-q^2)}+\log{(1-p^2)})+(3^n-2n-1)\log{(pq)}.$$
Since $\log{\det\mathcal{L}}=-\log{pq}$ we obtain our result. 
\end{proof}
We can observe that for $p=q=\frac{1}{2}$ this corresponds to the standard combinatorial graph Laplacian on the cyclic graph $C_{2\cdot 3^n}$. Therefore \eqref{eq:doublepq} becomes 
$$\log{\det\Delta_n}=n\log{9}+\log{4}$$
which is exactly as expected by observing that the cyclic graph has as many spanning trees as number of vertices and using Kirchhoff's Matrix-Tree theorem. This is also equivalent to the formula 
$$\prod_{k=1}^{2\cdot 3^n}(2-2\cos{\frac{2k\pi}{2\cdot 3^n}})=4\cdot 3^{2n}.$$
\section*{Acknowledgments}
We thank Professors Robert S. Strichartz, Gerald Dunne and Peter Grabner for helpful discussions and Anders Karlsson for suggesting the problem. The last-named author would also like to thank the Mathematics Department at the University of Connecticut for the hospitality during his research stay. Research of the first named author is supported by the Simons Foundation (via a Collaboration Grant for Mathematicians \#523544). Research of the second-named author is supported in part by NSF grant DMS-1613025.

\nocite{*}
\bibliographystyle{abbrv}
\bibliography{Biblio}

\end{document}